\newcommand{\nc}{\newcommand}
\nc{\symbvec}[1]{\boldsymbol{\bm{#1}}} 
\nc{\bfx}{\mathbf{x}} 
\nc{\bfy}{\mathbf{y}} 
\nc{\bfz}{\mathbf{z}} 
\nc{\bfu}{\mathbf{u}} 
\nc{\bfv}{\mathbf{v}} 
\nc{\bfw}{\mathbf{w}} 
\nc{\bft}{\mathbf{t}} 
\nc{\bfb}{\mathbf{b}} 
\nc{\bfn}{\mathbf{n}} 
\nc{\bff}{\mathbf{f}} 
\nc{\bfq}{\mathbf{q}} %
\nc{\bfd}{\mathbf{d}} 
\nc{\bfe}{\mathbf{e}} 
\nc{\bfA}{\mathbf{A}} 
\nc{\bfR}{\mathbf{R}} 
\nc{\bfD}{\mathbf{D}} 
\nc{\bfI}{\mathbf{I}} 
\nc{\bfM}{\mathbf{M}} 
\nc{\bfK}{\mathbf{K}} 
\nc{\bfV}{\mathbf{V}} 
\nc{\bfW}{\mathbf{W}} 
\nc{\bfC}{\mathbf{C}} 
\nc{\bfP}{\mathbf{P}} 
\nc{\bfZ}{\mathbf{Z}} 
\nc{\bfF}{\mathbf{F}} 
\nc{\bfQ}{\mathbf{Q}} %
\nc{\bfU}{\mathbf{U}} %
\nc{\bfB}{\mathbf{B}} %
\nc{\bfH}{\mathbf{H}} %
\nc{\bfE}{\mathbf{E}} %
\nc{\bfS}{\mathbf{S}} %
\nc{\bfX}{\mathbf{X}} %
\nc{\bfId}{\mathbf{I}_{\mathrm{d}}} 
\nc{\hatu}{\hat{u}} 
\nc{\hatv}{\hat{v}} 
\nc{\bbN}{\mathbb{N}} 
\nc{\bbR}{\mathbb{R}} 
\nc{\bbC}{\mathbb{C}} 
\nc{\bbP}{\mathbb{P}} 
\nc{\bbH}{\mathbb{H}} 
\nc{\bbA}{\mathbb{A}} %
\nc{\bbT}{\mathbb{T}} %
\nc{\bbD}{\mathbb{D}} %
\nc{\bbJ}{\mathbb{J}} %
\nc{\bbF}{\mathbb{F}} %
\nc{\bbM}{\mathbb{M}} %
\nc{\bbQ}{\mathbb{Q}} %
\nc{\bbK}{\mathbb{K}} %
\nc{\wH}{\widetilde{H}}
\nc{\wA}{\widetilde{A}}
\nc{\calS}{\mathcal{S}} 
\nc{\calD}{\mathcal{D}} 
\nc{\calT}{\mathcal{T}} 
\nc{\calR}{\mathcal{R}} 
\nc{\calP}{\mathcal{P}} 
\nc{\calV}{\mathcal{V}} 
\nc{\calW}{\mathcal{W}} 
\nc{\calJ}{\mathcal{J}} 
\nc{\calL}{\mathcal{L}} 
\nc{\calH}{\mathcal{H}} 
\nc{\calO}{\mathcal{O}} 
\nc{\calE}{\mathcal{E}} 
\nc{\calI}{\mathcal{I}} 
\nc{\rmP}{\mathrm{P}}
\nc{\rmI}{\mathrm{I}}
\nc{\rmG}{\mathrm{G}}
\nc{\rmA}{\mathrm{A}}
\nc{\rmR}{\mathrm{R}}
\nc{\rmT}{\mathrm{T}}
\nc{\rmV}{\mathrm{V}}
\nc{\rmW}{\mathrm{W}}
\nc{\rmK}{\mathrm{K}}
\nc{\rmL}{\mathrm{L}}
\nc{\rmH}{\mathrm{H}}
\nc{\rmX}{\mathrm{X}}
\nc{\rmY}{\mathrm{Y}}
\nc{\rmS}{\mathrm{S}}
\nc{\rmD}{\mathrm{D}}
\nc{\fraku}{\mathfrak{u}}
\nc{\frakv}{\mathfrak{v}}
\nc{\frake}{\mathfrak{e}}
\nc\diff{\mathop{}\!\mathrm{d}}   
\DeclareMathOperator{\diag}{diag} 
\DeclareMathOperator{\Id}{Id} 
\nc{\bfPi}{\bm{\Pi}} 
\nc{\bfSigma}{\bm{\Sigma}} %
\DeclareMathOperator{\GL}{G} 
\DeclareMathOperator{\V}{V} 
\DeclareMathOperator{\K}{K} 
\DeclareMathOperator{\Ktilde}{\tilde{\K}} 
\DeclareMathOperator{\W}{W} 
\DeclareMathOperator{\A}{A} %
\DeclareMathOperator{\B}{B} %
\DeclareMathOperator{\DD}{D} %
\DeclareMathOperator{\Sop}{S} %
\DeclareMathOperator{\Piop}{\Pi} %
\DeclareMathOperator{\T}{T} 
\let\Re\relax
\DeclareMathOperator{\Re}{Re} 
\nc{\SumFourier}{\sum_{m \in \mathbb{Z}}} 
\newcommand{\traceNeu}[1][]{\gamma_N^{#1}}
\newcommand{\traceDir}[1][]{\gamma_D^{#1}}
\nc{\AnsCombined}{Ansatz-Combined}
\nc{\AnsDL}{Ansatz-DL}
\nc{\AnsSL}{Ansatz-SL}
\nc{\DirCombined}{Direct-Combined}
\nc{\DirFK}{Direct-FK}
\nc{\DirSK}{Direct-SK}
\nc{\DirTraceDir}{Direct-TrDir}
\nc{\DirTraceNeu}{Direct-TrNeu}
\nc{\mH}{\mathrm{H}}
\nc{\mX}{\mathrm{X}}
\nc{\setMI}{\mathcal{S}}
\nc{\bx}{\symbvec{x}} 
\nc{\by}{\symbvec{y}} 
\DeclareFontFamily{OMX}{MnSymbolE}{}
\DeclareSymbolFont{MnLargeSymbols}{OMX}{MnSymbolE}{m}{n}
\DeclareFontShape{OMX}{MnSymbolE}{m}{n}{
    <-6>  MnSymbolE5
   <6-7>  MnSymbolE6
   <7-8>  MnSymbolE7
   <8-9>  MnSymbolE8
   <9-10> MnSymbolE9
  <10-12> MnSymbolE10
  <12->   MnSymbolE12
}{}
\DeclareFontShape{OMX}{MnSymbolE}{b}{n}{
    <-6>  MnSymbolE-Bold5
   <6-7>  MnSymbolE-Bold6
   <7-8>  MnSymbolE-Bold7
   <8-9>  MnSymbolE-Bold8
   <9-10> MnSymbolE-Bold9
  <10-12> MnSymbolE-Bold10
  <12->   MnSymbolE-Bold12
}{}
\let\llangle\@undefined
\let\rrangle\@undefined
\DeclareMathDelimiter{\llangle}{\mathopen}%
                     {MnLargeSymbols}{'164}{MnLargeSymbols}{'164}
\DeclareMathDelimiter{\rrangle}{\mathclose}%
                     {MnLargeSymbols}{'171}{MnLargeSymbols}{'171}
\newcommand{\figpart}[2]{\cref{#1}~(#2)}
\begin{document}

\title*{Spurious resonances for substructured FEM-BEM coupling}
\author{
    Antonin Boisneault\inst{1,2}\orcidID{0000-0001-7986-9048}\and\\
    Marcella Bonazzoli\inst{2}\orcidID{0000-0002-0284-5643}\and\\
    Xavier Claeys\inst{1}\orcidID{0000-0003-0826-6244}\and\\
    Pierre Marchand\inst{1}\orcidID{0000-0002-2522-6837}}
  
\authorrunning{A. Boisneault, M. Bonazzoli, X. Claeys, P. Marchand}  

\institute{
  \inst{1}
  POEMS, CNRS, Inria, ENSTA, Institut Polytechnique de Paris, 91120 Palaiseau, France,  
   \email{antonin.boisneault@inria.fr, xavier.claeys@ensta.fr, pierre.marchand@inria.fr}\\ 
  \inst{2}
  Inria, Unité de Mathématiques Appliquées, ENSTA, Institut Polytechnique de Paris, 91120 Palaiseau, France, 
  \email{marcella.bonazzoli@inria.fr}
}
%
%
\maketitle

\abstract*{ 
We are interested in time-harmonic acoustic scattering by an impenetrable obstacle in a medium where the wavenumber is constant in an exterior unbounded subdomain and is possibly heterogeneous in a bounded subdomain. The associated Helmholtz boundary value problem can be solved by coupling the Finite Element Method (FEM) in the heterogeneous subdomain with the Boundary Element Method (BEM) in the homogeneous subdomain. Recently, we designed and analyzed a new substructured FEM-BEM formulation, called Generalized Optimized Schwarz Method (GOSM)~\cite{ourPaper}. Unfortunately, it is well known that, even when the initial boundary value problem is well-posed, the variational formulation of classical FEM-BEM couplings can be ill-posed for certain wavenumbers, called \emph{spurious resonances}. In this paper, we focus on the Johnson-Nédélec and Costabel couplings and show that the GOSM derived from both is not immune to that issue. In particular, we give an explicit expression of the kernel of the local operator associated with the interface between the FEM and BEM subdomains. That kernel and the one of classical FEM-BEM couplings are simultaneously non-trivial.
}

\section{Introduction and definition of the problem}\label{sec:problem_definition}

When solving the Helmholtz equation in complex heterogeneous media, it
is of interest to decompose the domain according to the variation of
the wavenumber. Local problems in homogeneous subdomains can be
reformulated as equations set on their boundary (which is particularly
useful for unbounded subdomains), and the global problem is solved
with Finite Element Method - Boundary Element Method (FEM-BEM)
coupling techniques~\cite{BielakMacCamy1983EIP,Costabel1987SMC,JohnsonNedelec1980CBI}.
Recently, a substructured formulation, called \emph{Generalized
Optimized Schwarz Method} (GOSM), has been designed for bounded
domains, with weakly imposed boundary conditions,
see~\cite{Claeys2023NOS}, and extended in~\cite{ourPaper} to unbounded
domains and interface conditions arising from several FEM-BEM coupling
techniques. Unfortunately, FEM-BEM variational formulations can be ill-posed for
certain wavenumbers called \emph{spurious resonances}~\cite{SchulzHiptmair2022SRC}.
The combined FEM-BEM formulations from~\cite{HiptmairMeury2006SFB} eliminate the spurious resonances, but rely on a compact regularizing operator and their numerical implementation  might be complicated.
In this paper, we choose to focus on the  classical
Johnson-Nédélec~\cite{JohnsonNedelec1980CBI} and
Costabel~\cite{Costabel1987SMC} couplings and show that also the associated
GOSMs suffer from the same spurious resonances. For both couplings, we give
an explicit expression of the kernel of the local operator associated
with the interface between the FEM and BEM subdomains. This kernel
and the one of the corresponding classical FEM-BEM couplings are
simultaneously non-trivial.


For simplicity, we consider a three subdomains configuration. The impenetrable obstacle \(\Omega_{\text{O}} \subset \mathbb{R}^d\) (\(d=2,3\))
is assumed connected Lipschitz bounded, with a connected complement \(\Omega \coloneqq \mathbb{R}^d \setminus \overline{\Omega_{\text{O}}}\)
decomposed into two connected Lipschitz subdomains: \(\Omega_{\text{F}}\) bounded with \(\partial \Omega_{\text{O}} \subseteq \partial \Omega_{\text{F}}\)
and \(\Omega_{\text{B}}\) unbounded with bounded boundary, satisfying
\(\Omega_{\text{F}} \cap \Omega_{\text{B}} = \emptyset\) (see Figure~\ref{fig:geometries}).
We are interested in solving the following Helmholtz problem, which models time-harmonic acoustic wave propagation: find \(u\in H^1_{\text{loc}}(\Delta, \Omega)\) such that
\begin{align}\label{pbm:initial}
  \begin{cases}
    \begin{aligned}
      &- \Delta u - \kappa^2 u = f, \text{ in } \Omega,\\
      &\text{boundary condition on } \partial \Omega, \\
      &\text{Sommerfeld's radiation condition},
    \end{aligned}
  \end{cases}
\end{align}
where the wavenumber \(\kappa \colon \Omega \rightarrow \mathbb{R}_{>0}\) is constant in \(\Omega_{\text{B}}\) and \(f \in L^2(\Omega_{\text{F}}) \). See~\cite[Def.~2.6.1]{SauterSchwab2011BEM} for the definition of \(H^1_{\text{loc}}(\Delta, \Omega)\), and~\cite[Sect.~2.2]{MR1822275} for the Sommerfeld's radiation condition. Boundary conditions can be of Dirichlet, Neumann, Robin, or even mixed type, so \(\Omega_{\text{O}}\) can represent an impenetrable obstacle with a sound-soft or sound-hard boundary, for instance.
\begin{figure}[!ht]
  \centering
  \begin{subfigure}[t]{0.48\textwidth}
    \centering
    \includegraphics[scale=0.7]{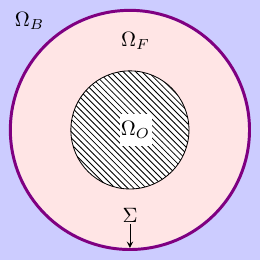} 
  \end{subfigure}
  \begin{subfigure}[t]{0.48\textwidth}
    \centering
    \includegraphics[scale=0.7]{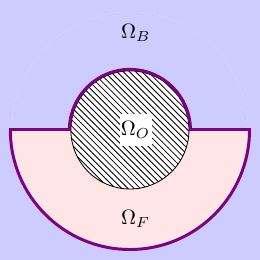} 
  \end{subfigure}
  \caption{Example of allowed (left) and forbidden (right) geometries (for simplicity). 
  }\label{fig:geometries}
\end{figure}

In FEM-BEM coupling, the FEM is used for the (possibly) heterogeneous subdomain \(\Omega_{\text{F}}\), the BEM is applied for the homogeneous domain \(\Omega_{\text{B}}\), and transmission conditions are imposed at the interface \(\Sigma \coloneqq\partial\Omega_{\text{F}}\cap\partial\Omega_{\text{B}}\). 
Our FEM-BEM formulation, based on the \emph{Generalized Optimized Schwarz Method} (GOSM)~\cite{ourPaper,Claeys2023NOS}, is set on \(\Sigma\) and reads:
\begin{align}\label{eq:substructurePb}
  (\Id+\Piop\Sop)
  (q_{\text{B}}, q_{\text{F}})
  = \textbf{rhs},
\end{align}
where \(\Piop\) is a (possibly) non-local exchange operator, \(\Sop \coloneqq \diag(\Sop_{\text{B}}, \Sop_{\text{F}})\) is a block-diagonal scattering operator and \(q_{\text{B}}, q_{\text{F}} \in H^{-1/2}(\Sigma)\) are outgoing impedance traces shared between \(\Omega_{\text{B}}\) and \(\Omega_{\text{F}}\).
More details about~\eqref{eq:substructurePb} can be found in~\cite[Prop.~8.1]{Claeys2023NOS}. 
To derive the substructured formulation~\eqref{eq:substructurePb}, the starting point is to write Problem~\eqref{pbm:initial} in variational form 
\begin{equation}\label{eq:var_pbm}
  \begin{aligned}
    & \text{Find}\ u\in H^{1}(\Omega_{\text{F}}), \ p \in H^{-{1}/{2}}(\Sigma)\ \text{such that } \forall v\in \mH^{1}(\Omega_{\text{F}}),  \forall q\in \mH^{-{1}/{2}}(\Sigma)\\
    & \int_{\Omega_{\text{F}}}(\nabla u \cdot \nabla v  - \kappa^{2} u v) \diff\bx + \langle \A_{\Sigma}(u\vert_{\Sigma},p),
    (v \vert_{\Sigma},q)\rangle = \ell(v),
  \end{aligned}
\end{equation}
where the explicit expression of \(\A_{\Sigma} \colon H^{{1}/{2}}(\Sigma)\times H^{-{1}/{2}}(\Sigma) \to H^{-{1}/{2}}(\Sigma)\times H^{{1}/{2}}(\Sigma)\) depends on the choice of the FEM-BEM coupling and involves {Boundary Integral Operators (BIOs)} (see~\cref{sec:bios}). The linear form \(\ell\) accounts for the contributions of the source term \(f\) and the boundary condition on \(\Omega_{\text{O}}\). For instance, \(\ell(v)= \int_{\Omega_{\text{F}}} f v \diff\bx\) for homogeneous Neumann boundary condition, or if \(\Omega_{\text{O}} = \emptyset\).
Here, the canonical duality pairing between a Banach space \(H\) and its topological dual \(H^*\) is denoted \(\langle \cdot, \cdot \rangle \colon H^{*} \times H \to \mathbb{C}\) and defined by $\langle \varphi, v\rangle\coloneq \varphi(v)$. 
We emphasize that the duality pairings we consider do \emph{not} involve any complex conjugation.

Next, we introduce a boundary operator \(\B_{\Sigma} \colon H^{{1}/{2}}(\Sigma)\times H^{-{1}/{2}}(\Sigma) \to H^{{1}/{2}}(\Sigma)\), defined by \(\B_{\Sigma}(\phi,p) \coloneqq \phi\), and also consider a \emph{transmission} (or \emph{impedance}) \emph{operator} \(\T_{\Sigma} \colon \! H^{{1}/{2}}(\Sigma) \to H^{-{1}/{2}}(\Sigma)\), satisfying \(\langle \T_{\Sigma}(\phi),\overline{\phi}\rangle > 0 \;\;\forall \phi\in H^{1/2}(\Sigma) \setminus \{0\}\), \(\T_{\Sigma}^* = \T_{\Sigma}\), and \(\Re(\T_{\Sigma}) > 0\).
The scattering operator \(\Sop_{\text{B}}\) involved in the GOSM is given by \(\Id
+ 2\, \imath \T_{\Sigma} \B_{\Sigma}(\A_{\Sigma}- \imath \B_{\Sigma}^* \T_{\Sigma} \B_{\Sigma})^{-1} \B_{\Sigma}^*\), see~\cite[Prop.~7.2]{Claeys2023NOS}. 
We aim to study the kernel of \(\A_{\Sigma} - \imath \B_{\Sigma}^* \T_{\Sigma} \B_{\Sigma}\), which is
crucial to establishing the well-posedness of the GOSM\@.

\section{Boundary integral operators and kernel of \(\A_{\Sigma} - \imath \B_{\Sigma}^* \T_{\Sigma} \B_{\Sigma}\)}\label{sec:bios}

The \emph{Dirichlet-Neumann trace map} on \(\Sigma\) (from \(\Omega_{\text{B}}\)), \(\gamma \colon H^1_{\text{loc}}(\Delta, \overline{\Omega_{\text{B}}}) \to H^{{1}/{2}}(\Sigma)\times H^{-{1}/{2}}(\Sigma)\), is defined as the unique bounded linear operator satisfying \(\gamma(\varphi) \coloneqq \left(\traceDir(\varphi), \traceNeu(\varphi)\right) \coloneqq \left(\varphi|_{\Sigma}, \symbvec{n}_{\text{B}} \cdot \nabla \varphi|_{\Sigma}\right), \forall \varphi\in \mathscr{C}^{\infty}(\overline{\Omega_{\text{B}}}) \coloneq\{\varphi\vert_{\Omega},\;\varphi\in\mathscr{C}^{\infty}(\mathbb{R}^d)\}\), where \(\symbvec{n}_{\text{B}}\) is the unit normal on \(\partial \Omega_{\text{B}}\) directed toward the exterior of \(\Omega_{\text{B}}\). 

Next, denote \(\mathscr{G}_{\kappa}\) the outgoing Helmholtz \emph{Green kernel} with wavenumber \(\kappa>0\), satisfying \((\Delta + \kappa^2) \mathscr{G}_{\kappa} = \delta\) in \(\mathbb{R}^d\) where \(\delta\) is the Dirac delta function. 
For \(d = 3\), \(\mathscr{G}_{\kappa}(\bx)\coloneqq \exp(\imath \kappa\vert \bx\vert)/(4\pi\vert \bx\vert)\), and for \(d = 2\) \(\mathscr{G}_{\kappa}(\bx)\coloneqq \imath H^{(1)}_{0}(\kappa \vert \bx\vert)/(4\pi)\), with \( H^{(1)}_{0}\) the \(0\)-th order Hankel function of the first kind~\cite[Chapter~9]{McLean2000SES}. For any \(\bx\in \mathbb{R}^d\setminus\Sigma\), and sufficiently smooth traces $(v,p)$,
define the \emph{total layer potential operator} by 
\begin{equation}\label{LayerPotentialOperator}
  \begin{aligned}
  \GL_{\Sigma}(v,p)(\bx)\coloneqq
  \int_{\Sigma}  \symbvec{n}_{\text{B}} (\by)\cdot(\nabla \mathscr{G}_{\kappa})(\bx-\by)\, v(\by) 
  +\;\mathscr{G}_{\kappa}(\bx-\by)\, p(\by) \diff s (\by),
  \end{aligned}
\end{equation}
where \(\diff s\) refers to the Lebesgue surface measure on \(\Sigma\). 
The map \((v,p)\mapsto \GL_{\Sigma}(v,p)\vert_{\Omega_{\text{B}}}\) can be extended by density as a bounded linear operator \(H^{1/2}(\Sigma)\times H^{-1/2}(\Sigma) \to H^{1}_{\text{loc}}(\Delta,\Omega_{\text{B}})\). 
For any pair \((v,p)\in H^{1/2}(\Sigma)\times H^{-1/2}(\Sigma)\), the function \(u \coloneqq \GL_{\Sigma}(v,p)\) solves the Helmholtz equation with wavenumber \(\kappa\) in \(\mathbb{R}^d \setminus \Sigma\), see e.g.~\cite[§3.1]{SauterSchwab2011BEM}, and satisfies Sommerfeld's radiation condition, see~\cite{MR1822275}.

Using the Dirichlet-Neumann trace map $\gamma$, we form the \emph{Calderón projector} \({\gamma\cdot\GL_{\Sigma}}\colon H^{1/2}(\Sigma)\times H^{-1/2}(\Sigma) \to H^{1/2}(\Sigma) \times H^{-1/2}(\Sigma)\), commonly decomposed as 
\begin{equation*}\label{eq:BIO_definition}
  \gamma\cdot\GL_{\Sigma} =   
  \dfrac{1}{2}\begin{bmatrix}
    \Id & 0\\
    0 & \Id
  \end{bmatrix}
  +
  \begin{bmatrix}
    \K_{\kappa} & \V_{\kappa} \\
    \W_{\kappa} & \Ktilde_{\kappa}
  \end{bmatrix},
\end{equation*}
where the four classical \emph{Boundary Integral Operators} (BIOs) appear: \(\V_{\kappa} \colon H^{-1/2}(\Sigma)\to H^{1/2}(\Sigma)\) (\emph{single layer}), \(\K_{\kappa} \colon H^{1/2}(\Sigma)\to H^{1/2}(\Sigma)\) (\emph{double layer}), \(\Ktilde_{\kappa} \colon H^{-1/2}(\Sigma)\to \mH^{-1/2}(\Sigma)\) (\emph{adjoint double layer}), and \(\W_{\kappa} \colon H^{1/2}(\Sigma)\to H^{-1/2}(\Sigma)\) (\emph{hypersingular}). Both \(\V_{\kappa}\) and \(\W_{\kappa}\) are symmetric, while \({\K_{\kappa}^* = - \Ktilde_{\kappa}}\). We recall that the adjoint \(*\) does not involve complex conjugation. 

These BIOs naturally arise when deriving Boundary Integral Equations for Dirichlet or Neumann boundary conditions. For instance, for \(u \in H^1(\Omega_{\text{B}})\) such that \(\Delta u + \kappa^2 u = 0\) in \(\Omega_{\text{B}}\), they appear in the \emph{Calderón equations} (see~\cite[Sect.~3.4]{SauterSchwab2011BEM})
\begin{subequations}
  \begin{align}
    (\Id/2 - \K_{\kappa}) \traceDir(u) &= \V_{\kappa} \traceNeu(u), \label{eq:calderon_1} \\
    (\Id/2 - \Ktilde_{\kappa}) \traceNeu(u) &= \W_{\kappa} \traceDir(u). \label{eq:calderon_2}
  \end{align}
\end{subequations}
Alternatively, combinations of these BIOs arise when dealing with an
outgoing impedance boundary condition \(\symbvec{n}_{\text{B}} \cdot \nabla
u|_{\Sigma} - \imath \T_{\Sigma} u |_{\Sigma} = g\), \(g \in
H^{-1/2}(\Sigma)\). For instance, subtracting \(\imath \V_{\kappa}
\T_{\Sigma} \traceDir(u)\) in Equation~\eqref{eq:calderon_1} yields \(
\DD_{\kappa, \T_{\Sigma}}^{*} \traceDir(u) = \V_{\kappa}(g) \), where
we introduced the \emph{impedance BIO} \(\DD_{\kappa, \T_{\Sigma}}^{*}
\coloneqq ({\Id}/{2} - \K_{\kappa} ) - \imath \V_{\kappa} \T_{\Sigma}
\).  Other impedance BIOs could be derived using the Calderón
equation~\eqref{eq:calderon_2}, but we do not focus on them.

For particular values of \(\kappa\), called \emph{spurious resonances},
\(\V_{\kappa}, \K_{\kappa}, \Ktilde_{\kappa}\) and \(\W_{\kappa}\) are well known to be
singular~\cite{ChandlerWildeGrahamEtAl2012NAB,McLean2000SES,SauterSchwab2011BEM}. A similar result holds for the impedance BIO \(\DD_{\kappa, \T_{\Sigma}}^{*}\), see~\cite[Sect.~2.6]{ChandlerWildeGrahamEtAl2012NAB}.
\begin{lemma}\label{lemma:kernel_impedance_bios}
We have \(\ker(\DD_{\kappa, \T_{\Sigma}}^{*})\neq\{0\}\)
if and only if the homogeneous \emph{Dirichlet} Helmholtz problem set in
\(\mathbb{R}^d \setminus \overline{\Omega}_{\text{B}}\) with wavenumber
\(\kappa\) has a non-trivial solution. The elements of
\(\ker(\DD_{\kappa, \T_{\Sigma}}^{*})\) are the Dirichlet traces of
solutions to Helmholtz problems set in \(\Omega_{\text{B}}\) with
non-homogeneous impedance boundary condition \(g \in
\ker(\V_{\kappa})\).
\end{lemma}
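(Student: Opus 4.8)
The plan is to connect $\ker(\DD^{*}_{\kappa,\T_\Sigma})$ (which I will abbreviate $\ker(\DD^{*})$) to the first Calderón identity~\eqref{eq:calderon_1} and to the exterior Helmholtz problem posed in $\Omega_{\text{B}}$, and then to invoke the classical characterization of $\ker(\V_\kappa)$ in terms of interior Dirichlet eigenvalues. Throughout I would use that both the exterior Dirichlet problem and the exterior (outgoing) impedance problem in $\Omega_{\text{B}}$ are well-posed; only uniqueness is essential, and it follows from the radiation condition together with the assumed sign properties $\T_\Sigma^*=\T_\Sigma$ and $\langle\T_\Sigma(\phi),\overline{\phi}\rangle>0$ for $\phi\neq 0$.

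I would first prove the second assertion, which is the structural core. Fix $\phi\in H^{1/2}(\Sigma)$, let $u$ be the unique radiating solution of the Helmholtz equation in $\Omega_{\text{B}}$ with $\traceDir(u)=\phi$, and set $g\coloneqq\traceNeu(u)-\imath\T_\Sigma\phi$. Substituting $\traceNeu(u)=g+\imath\T_\Sigma\phi$ into~\eqref{eq:calderon_1} gives $(\Id/2-\K_\kappa)\phi=\V_\kappa g+\imath\V_\kappa\T_\Sigma\phi$, that is $\DD^{*}_{\kappa,\T_\Sigma}\phi=\V_\kappa g$. Hence $\phi\in\ker(\DD^{*})$ if and only if $g\in\ker(\V_\kappa)$. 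Since $\phi\mapsto u$ is a bijection onto the radiating solutions in $\Omega_{\text{B}}$, this shows precisely that the elements of $\ker(\DD^{*})$ are the Dirichlet traces of those $u$ whose impedance datum $g$ lies in $\ker(\V_\kappa)$, which is the second statement.

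Next I would deduce the equivalence. If $\ker(\DD^{*})\neq\{0\}$, take $\phi\neq 0$ with associated $u$ and $g\in\ker(\V_\kappa)$: were $g=0$, then $u$ would solve the homogeneous exterior impedance problem, forcing $u=0$ and thus $\phi=0$, a contradiction; hence $g\neq 0$ and $\ker(\V_\kappa)\neq\{0\}$. Conversely, if $\ker(\V_\kappa)\neq\{0\}$, pick $g\in\ker(\V_\kappa)\setminus\{0\}$, let $u$ solve the exterior impedance problem with datum $g$, and set $\phi=\traceDir(u)$; the previous paragraph gives $\phi\in\ker(\DD^{*})$, and $\phi\neq 0$ because $\phi=0$ would make $u$ a solution of the exterior Dirichlet problem with zero data, whence $u=0$ and $g=\traceNeu(u)=0$, again a contradiction. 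To close the loop I would invoke the classical fact that $\ker(\V_\kappa)\neq\{0\}$ is equivalent to $\kappa^2$ being a Dirichlet eigenvalue of $\mathbb{R}^d\setminus\overline{\Omega}_{\text{B}}$: a nonzero $p\in\ker(\V_\kappa)$ produces a single-layer potential with vanishing Dirichlet trace, hence vanishing in $\Omega_{\text{B}}$ by exterior Dirichlet uniqueness but nonzero inside (its Neumann jump equals $\pm p\neq 0$), giving an interior eigenfunction, and conversely an interior eigenfunction represented as a single layer yields a nonzero density in $\ker(\V_\kappa)$.

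The hard part will not be conceptual but the sign bookkeeping in the uniqueness arguments. One must keep $\symbvec{n}_{\text{B}}$ oriented out of $\Omega_{\text{B}}$ (i.e.\ into the bounded region) consistently, so that Green's identity on $\Omega_{\text{B}}$ truncated at a large sphere produces the boundary term $\int_\Sigma\overline{u}\,\traceNeu(u)\diff s$ with the correct sign and the radiation condition contributes a nonnegative far-field term; the homogeneous impedance condition then turns this boundary term into $\imath\langle\T_\Sigma(\phi),\overline{\phi}\rangle$, and the strict positivity $\langle\T_\Sigma(\phi),\overline{\phi}\rangle>0$ must land on the side opposite to the far-field term to force $\phi=0$ and $u=0$. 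The remaining technical point is existence (not merely uniqueness) for the exterior impedance problem, used in the converse direction; this is standard via Fredholm theory for the associated boundary integral equation, and I would simply cite it from the references.
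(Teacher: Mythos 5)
Your proof is correct, but there is nothing in the paper to compare it against: the paper does not prove \cref{lemma:kernel_impedance_bios}, it imports it from \cite[Sect.~2.6]{ChandlerWildeGrahamEtAl2012NAB}. What you wrote is essentially a self-contained reconstruction of the standard argument behind that citation, adapted from the constant-impedance case treated there to the operator-valued impedance \(\T_{\Sigma}\); the adaptation is exactly where the hypotheses \(\T_{\Sigma}^{*}=\T_{\Sigma}\) and \(\langle\T_{\Sigma}(\phi),\overline{\phi}\rangle>0\) are consumed, in the uniqueness argument for the exterior impedance problem. Your structural step --- parametrize \(\phi\in H^{1/2}(\Sigma)\) by the radiating exterior Dirichlet solution \(u\), read \(\DD^{*}_{\kappa,\T_{\Sigma}}\phi=\V_{\kappa}g\) with \(g=\traceNeu(u)-\imath\T_{\Sigma}\phi\) off \eqref{eq:calderon_1}, then characterize \(\ker(\V_{\kappa})\) by interior Dirichlet eigenfunctions via single-layer jump relations --- is sound, and your bijective correspondence \(\phi\in\ker(\DD^{*}_{\kappa,\T_{\Sigma}})\Leftrightarrow g\in\ker(\V_{\kappa})\) in fact proves slightly more than the lemma states (it is also what makes \cref{rm:kernels_equality} transparent). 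Your sign discussion is consistent with \(\symbvec{n}_{\text{B}}\) pointing out of \(\Omega_{\text{B}}\): taking imaginary parts of Green's identity on \(\Omega_{\text{B}}\cap B_{R}\), the far-field term is nonnegative and the impedance term equals \(\langle\T_{\Sigma}(\phi),\overline{\phi}\rangle\), so whether you place them on the same side (nonnegative terms summing to zero) or opposite sides, both vanish and \(\phi=0\) follows. Three small caveats, none fatal: (i) \eqref{eq:calderon_1} is stated in the paper for \(u\in H^{1}(\Omega_{\text{B}})\), but since \(\Omega_{\text{B}}\) is unbounded you need, and implicitly use, its validity for radiating \(H^{1}_{\mathrm{loc}}\) solutions, which is the correct version given that \(\mathscr{G}_{\kappa}\) is the outgoing kernel --- radiating solutions are not globally \(H^{1}\); (ii) the converse direction genuinely requires existence, not merely uniqueness, for the exterior problem with the \emph{operator} impedance \(\T_{\Sigma}\), so the Fredholm argument you defer to should be cited in that setting (e.g.\ the local well-posedness results underlying the GOSM in \cite{ourPaper,Claeys2023NOS}) rather than the constant-coupling literature; (iii) in the interior-eigenfunction converse you can bypass Holmgren entirely, since if \(\traceNeu(w)=0\) the Green representation \(w=\pm\SL(\traceNeu(w))\) already forces \(w=0\).
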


\begin{remark}\label{rm:kernels_equality}
  Since \(\Omega_{\text{B}}\) is a connected unbounded domain with bounded boundary, \(\ker(\V_{\kappa}) = \ker(\Id/2 + \Ktilde_{\kappa}) = \ker(\DD_{\kappa, \T_{\Sigma}})\).
  When \(\Omega_{\text{B}}\) is bounded, \(\ker(\V_{\kappa}) = \ker(\Id/2 + \Ktilde_{\kappa}) \neq \ker(\DD_{\kappa, \T_{\Sigma}})\).
\end{remark}

Because of spurious resonances, classical FEM-BEM formulations can be non-uniquely solvable at certain wavenumbers, even when~\eqref{pbm:initial} is well-posed, see~\cite{SchulzHiptmair2022SRC}.
The classical Johnson-Nédélec and Costabel couplings are of the form~\eqref{eq:var_pbm} by taking respectively
\begin{equation*}
  \A_{\Sigma,\text{JN}} \coloneqq
  \begin{bmatrix}
    0 & \;\Id \\
    \Id/2 - \K_{\kappa} &\; -\V_{\kappa}
  \end{bmatrix}
  \quad \text{ and } \quad
  \A_{\Sigma, \text{C}} \coloneqq
  \begin{bmatrix}
    \W_{\kappa} & \Id/2 + \Ktilde_{\kappa} \\
    \Id/2 - \K_{\kappa} & -\V_{\kappa}
  \end{bmatrix}.
\end{equation*}
We now establish that the GOSM reformulations~\eqref{eq:substructurePb}
of these FEM-BEM formulations suffer from the same
spurious resonances. Indeed, the inverse of \(\A_{\Sigma}- \imath
\B^*_{\Sigma} \T_{\Sigma} \B_{\Sigma}\) is needed to define
\(\Sop_{\text{B}}\), but, due to spurious resonances, \(\A_{\Sigma}- \imath
\B^*_{\Sigma} \T_{\Sigma} \B_{\Sigma}\) can be singular.  In the
following proofs, note that \(\B^*_{\Sigma}(p) = (p, 0)\), so
\(\B_{\Sigma}^* \T_{\Sigma} \B_{\Sigma}(\phi,p) = (\T_{\Sigma} \phi,
0)\).

\begin{proposition}[Johnson-Nédélec coupling]\label{prop:kernel_JN}
  If \(\A_{\Sigma} = \A_{\Sigma,\text{JN}}\), then
  \begin{equation*}
    \ker(\A_{\Sigma}- \imath \B^*_{\Sigma} \T_{\Sigma} \B_{\Sigma}) = 
    \left\{
    \left(
      \phi,
      \imath \T_{\Sigma} \phi
    \right)
    \mid    
    \phi \in \ker(\DD^*_{\kappa,\T_{\Sigma}})
    \right\}.
  \end{equation*}
\end{proposition}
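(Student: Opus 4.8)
The plan is to compute the action of $\A_{\Sigma}- \imath \B^*_{\Sigma} \T_{\Sigma} \B_{\Sigma}$ componentwise on a pair $(\phi,p)\in H^{1/2}(\Sigma)\times H^{-1/2}(\Sigma)$ and to reduce the kernel equation to the impedance BIO $\DD_{\kappa,\T_{\Sigma}}^{*}$ introduced just above the statement. Using the block expression of $\A_{\Sigma,\text{JN}}$ together with the identity $\B_{\Sigma}^* \T_{\Sigma} \B_{\Sigma}(\phi,p) = (\T_{\Sigma} \phi, 0)$ recalled before the proposition, I would first write
\begin{equation*}
(\A_{\Sigma,\text{JN}}- \imath \B^*_{\Sigma} \T_{\Sigma} \B_{\Sigma})(\phi,p)
= \bigl(\, p - \imath \T_{\Sigma}\phi,\ (\Id/2 - \K_{\kappa})\phi - \V_{\kappa} p \,\bigr).
\end{equation*}
Thus $(\phi,p)$ lies in the kernel if and only if both components vanish.

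The first component yields directly $p = \imath \T_{\Sigma}\phi$, which fixes $p$ in terms of $\phi$ and already accounts for the shape $(\phi,\ \imath \T_{\Sigma}\phi)$ of the elements in the claimed kernel. The second step is to substitute this value of $p$ into the second component: the equation $(\Id/2 - \K_{\kappa})\phi - \V_{\kappa} p = 0$ becomes $(\Id/2 - \K_{\kappa})\phi - \imath \V_{\kappa}\T_{\Sigma}\phi = 0$, which is precisely $\DD_{\kappa,\T_{\Sigma}}^{*}\phi = 0$ by the very definition of the impedance BIO. Hence the kernel condition is equivalent to the conjunction $\phi \in \ker(\DD_{\kappa,\T_{\Sigma}}^{*})$ and $p = \imath \T_{\Sigma}\phi$, which is exactly the asserted description of the kernel.

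Since every step above is an equivalence rather than a one-way implication, no separate argument for the reverse inclusion is needed, and there is no genuine analytical obstacle: the statement is essentially a book-keeping identity once the action of the operator is written out explicitly. The only point requiring mild care is to keep track of the mapping properties so that each component lands in the correct Sobolev space, namely $p - \imath \T_{\Sigma}\phi \in H^{-1/2}(\Sigma)$ and $(\Id/2-\K_{\kappa})\phi - \V_{\kappa} p \in H^{1/2}(\Sigma)$, confirming that the whole computation takes place in the codomain $H^{-1/2}(\Sigma)\times H^{1/2}(\Sigma)$ of $\A_{\Sigma,\text{JN}}$. For context one may also observe that, combined with \cref{lemma:kernel_impedance_bios} and \cref{rm:kernels_equality}, this identification makes the non-triviality of the GOSM kernel coincide with that of $\DD_{\kappa,\T_{\Sigma}}^{*}$, and hence with the spurious resonances of the classical Johnson-Nédélec coupling.
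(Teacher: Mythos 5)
Your proof is correct and is essentially identical to the paper's own argument: both compute the action componentwise using \(\B_{\Sigma}^* \T_{\Sigma} \B_{\Sigma}(\phi,p) = (\T_{\Sigma}\phi,0)\), solve the first equation for \(p = \imath\T_{\Sigma}\phi\), and substitute into the second to recognize \(\DD^*_{\kappa,\T_{\Sigma}}\phi = 0\), noting that all steps are equivalences so no separate reverse inclusion is needed.
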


\begin{proof}
  \((\phi, p) \in \ker(\A_{\Sigma}- \imath \B^*_{\Sigma} \T_{\Sigma} \B_{\Sigma})\) if and only if
  \( \lbrack\; - \imath \T_{\Sigma} \phi + p = 0 \;\text{and}\;
  (\Id/2 - \K_{\kappa}) \phi - \V_{\kappa} p = 0\;\rbrack\), which is equivalent to 
  \( \lbrack\;p = \imath \T_{\Sigma} \phi\;\text{and}\;
  \DD^*_{\kappa,\T_{\Sigma}} \phi = 0\;\rbrack  \).
\end{proof}

\begin{proposition}[Costabel coupling]\label{prop:kernel_Costabel}
  If \(\A_{\Sigma} = \A_{\Sigma, \text{C}}\), then
  \begin{equation*}
    \ker(\A_{\Sigma}- \imath \B^*_{\Sigma} \T_{\Sigma} \B_{\Sigma}) = 
    \left\{
    \left(
      0,
      p
    \right)
    \mid    
    p \in \ker(\DD_{\kappa,\T_{\Sigma}})
    \right\}.
  \end{equation*}
\end{proposition}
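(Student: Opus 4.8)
The plan is to rewrite the two scalar conditions defining the kernel as a homogeneous exterior impedance problem for a layer potential, and then to annihilate it using the uniqueness of that problem. Using $\B_{\Sigma}^*\T_{\Sigma}\B_{\Sigma}(\phi,p)=(\T_{\Sigma}\phi,0)$, the relation $(\phi,p)\in\ker(\A_{\Sigma,\text{C}}-\imath\B_{\Sigma}^*\T_{\Sigma}\B_{\Sigma})$ is equivalent to
\begin{align*}
  \W_{\kappa}\phi+(\Id/2+\Ktilde_{\kappa})p-\imath\T_{\Sigma}\phi&=0,\\
  (\Id/2-\K_{\kappa})\phi-\V_{\kappa}p&=0.
\end{align*}
The whole difficulty is to prove that these force $\phi=0$; the description of $p$ then follows quickly.

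First I would introduce the radiating Helmholtz solution $u\coloneqq\GL_{\Sigma}(\phi,p)\vert_{\Omega_{\text{B}}}$, whose Cauchy data are read off the Calderón projector $\gamma\cdot\GL_{\Sigma}=\Id/2+M$, namely $\traceDir(u)=(\Id/2+\K_{\kappa})\phi+\V_{\kappa}p$ and $\traceNeu(u)=\W_{\kappa}\phi+(\Id/2+\Ktilde_{\kappa})p$. Substituting $\V_{\kappa}p=(\Id/2-\K_{\kappa})\phi$ from the second equation into $\traceDir(u)$ shows that the second equation is nothing but $\traceDir(u)=\phi$, while the first equation reads $\traceNeu(u)=\imath\T_{\Sigma}\phi$. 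Together they give $\traceNeu(u)=\imath\T_{\Sigma}\traceDir(u)$, i.e. $u$ solves the homogeneous exterior impedance (Robin) problem in $\Omega_{\text{B}}$.

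The core step is the uniqueness of this exterior problem, which I would establish by a Rellich-type energy argument. Applying Green's first identity to $u$ on $\Omega_{\text{B}}\cap B_R$ for a large ball $B_R\supset\Sigma$ gives
\[
  \int_{\Omega_{\text{B}}\cap B_R}\!\!\bigl(\lvert\nabla u\rvert^2-\kappa^2\lvert u\rvert^2\bigr)\diff\bx
  =\int_{\partial B_R}\overline{u}\,\partial_r u\diff s
  +\langle\traceNeu(u),\overline{\traceDir(u)}\rangle,
\]
and the impedance relation turns the $\Sigma$-term into $\imath\langle\T_{\Sigma}\phi,\overline{\phi}\rangle$. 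Taking imaginary parts eliminates the real volume term and leaves $\langle\T_{\Sigma}\phi,\overline{\phi}\rangle=-\Im\int_{\partial B_R}\overline{u}\,\partial_r u\diff s$; letting $R\to\infty$ and using Sommerfeld's radiation condition makes the right-hand side nonpositive, whereas the assumed positivity of $\T_{\Sigma}$ makes the left-hand side nonnegative. Hence $\langle\T_{\Sigma}\phi,\overline{\phi}\rangle=0$, and strict positivity yields $\phi=0$. I expect this energy step to be the main obstacle: a purely algebraic route stalls, since applying $\V_{\kappa}$ to the first equation and invoking the Calderón identities $\V_{\kappa}\W_{\kappa}=(\Id/2-\K_{\kappa})(\Id/2+\K_{\kappa})$ and $\V_{\kappa}\Ktilde_{\kappa}=-\K_{\kappa}\V_{\kappa}$ only produces $\DD^{*}_{\kappa,\T_{\Sigma}}\phi=0$, whose kernel is generically non-trivial — the complex-kernelled BIOs do not commute with conjugation, so one genuinely needs the PDE.

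Finally, setting $\phi=0$ in the two equations leaves $\V_{\kappa}p=0$ and $(\Id/2+\Ktilde_{\kappa})p=0$, which by \cref{rm:kernels_equality} are both equivalent, for the unbounded $\Omega_{\text{B}}$, to $p\in\ker(\DD_{\kappa,\T_{\Sigma}})$. The converse is immediate: if $\phi=0$ and $p\in\ker(\DD_{\kappa,\T_{\Sigma}})=\ker(\V_{\kappa})=\ker(\Id/2+\Ktilde_{\kappa})$, both equations hold, so $(0,p)$ lies in the kernel. This gives the stated characterization.
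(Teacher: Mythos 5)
Your proof is correct and takes essentially the same route as the paper's: the same kernel equations, the same layer-potential ansatz \(\GL_{\Sigma}(\phi,p)\) whose Cauchy data reduce the system to the homogeneous exterior impedance problem in \(\Omega_{\text{B}}\), and the same appeal to \cref{rm:kernels_equality} for the reverse inclusion. The only difference is that you spell out the uniqueness of that exterior impedance problem via the Rellich-type energy identity, whereas the paper simply invokes it to conclude \(w=0\) (and hence \(\phi=0\)) — a worthwhile expansion, and your side remark that the purely algebraic route only yields \(\DD^{*}_{\kappa,\T_{\Sigma}}\phi=0\) correctly identifies why the PDE argument is genuinely needed.
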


\begin{proof}
  We first prove (\(\subset\)). Let \((\phi, p) \in \ker(\A_{\Sigma}- \imath \B^*_{\Sigma} \T_{\Sigma} \B_{\Sigma})\). Then
  \begin{equation}\label{eq:calderon_costabel_1}
      (\W_{\kappa} - \imath \T_{\Sigma}) \phi + (\Id/2 + \Ktilde_{\kappa}) p = 0,\quad
      (\Id/2 - \K_{\kappa}) \phi - \V_{\kappa} p = 0.
  \end{equation}
  Let \(w \coloneqq \GL_{\Sigma}(\phi,p)\) and apply the Dirichlet-Neumann trace operator \(\gamma\) on \(w\) to obtain \(\traceDir(w) = \left(\Id/2 + \K_{\kappa}\right) \phi + \V_{\kappa} p\) and \(\traceNeu(w) = \W_{\kappa} \phi + \left(\Id/2 + \Ktilde_{\kappa}\right) p\).
  It follows from Equation~\eqref{eq:calderon_costabel_1} that \(\phi = \traceDir(w)\) and \(\traceNeu(w) = \imath \T_{\Sigma} \phi \). The impedance trace \(\traceNeu(w) - \imath \T_{\Sigma} \traceDir(w)\) is then null. Since by construction \(w\) is solution to the Helmholtz equation in \(\Omega_{\text{B}}\) and satisfies Sommerfeld's radiation condition, \(w = 0\), and so \(\phi = 0\). 
  Using \(\T_{\Sigma}\) to combine the two equations of~\eqref{eq:calderon_costabel_1}, we conclude that \(p\in \ker(\DD_{\kappa, \T_{\Sigma}})\).

  To prove (\(\supset\)), observe that according to \cref{rm:kernels_equality}, \(p\in \ker(\DD_{\kappa, \T_{\Sigma}})=\ker(\V_{\kappa})=\ker(\Id/2+\Ktilde_{\kappa})\), so we directly obtain the relations in Equation~\eqref{eq:calderon_costabel_1} for \(\phi = 0\).
\end{proof}

\begin{remark}
  Propositions~\ref{prop:kernel_JN}--\ref{prop:kernel_Costabel} still hold when \(\A_{\Sigma}\) is derived from a bounded domain, even though \cref{rm:kernels_equality} no more holds. The proofs are even simpler since \(\ker(\DD^*_{\kappa,\T_{\Sigma}})\) and \(\ker(\DD_{\kappa,\T_{\Sigma}})\) are both reduced to the trivial element, so is \(\ker(\A_{\Sigma}- \imath \B^*_{\Sigma} \T_{\Sigma} \B_{\Sigma})\).
\end{remark}

\begin{remark}
  Most of the restrictions imposed on the geometry in Section~\ref{sec:problem_definition} have been made to simplify the expression of the variational formulation~\eqref{eq:var_pbm}, and those of the kernels in Propositions~\ref{prop:kernel_JN}--\ref{prop:kernel_Costabel}. The expressions of the kernels hold when \(\Omega_B\) is still connected, but \(\Omega_{\text{F}}\) is not connected or \(\partial \Omega_{\text{O}} \nsubseteq \partial \Omega_{\text{F}}\) (see~e.g.~\figpart{fig:geometries}{right}), by taking care of considering \(\partial \Omega_B\) instead of \(\Sigma\).
\end{remark}

\section{Numerical illustration}

We consider the scattering of an incoming plane wave \(u_{i}(r,\theta) = \exp(\imath \kappa \, r \cos\theta)\), where \((r,\theta)\) are the polar coordinates, by a sound-soft obstacle \(\mathcal{D} = \Omega_{\text{O}}\), which is a disk of radius \(1\). 
The whole domain \(\Omega = \mathbb{R}^d \setminus \mathcal{D}\) is assumed homogeneous, that is, \(\kappa=k > 0\) constant.  
This can be modeled by Problem~\eqref{pbm:initial} with (non-homogeneous) Dirichlet boundary conditions and \(f=0\), whose unique solution is  
\begin{equation*}
u_{\text{S}}(r, \theta) \coloneqq -\sum_{p \in \mathbb{Z}} \exp(\imath p \theta)\imath^{\lvert p \rvert}
J_{\lvert p \rvert}(\kappa r) \frac{H^1_{\lvert p \rvert}(\kappa r)}{H^1_{\lvert p \rvert}(\kappa )},
\end{equation*}
with \(J_{\nu}\) and \(H^1_{\nu}\) respectively the Bessel and Hankel function of first kind of order \(\nu\).
Here, \(\Omega_{\text{F}}\) is the annulus of radii \(1\) and \(2\), and \(\Omega_{\text{B}} = \Omega \setminus (\overline{\Omega_{\text{F}}} \cup \overline{\Omega_{\text{O}}}) \), see Figure~\ref{fig:geometries}~(left).

As transmission operator \(\T_{\Sigma}\) in the BEM domain we choose
\(\W_{i \kappa}\), the hypersingular operator for the \emph{Yukawa
operator}, that is, \(-\Delta + \kappa^2\,\Id\).  As transmission
operator \(\T_{\Omega_{\text{F}}}\) in the FEM domain we choose a Schur
complement-based operator relying on the discretization of a positive
Dirichlet-to-Neumann map for the Yukawa operator,
see~\cite[Chapter~8]{Parolin2020NOD}.  For more details about the
expression of the chosen transmission operators, we refer
to~\cite[Sect.~10.2]{ourPaper}. We use $\mathbb{P}_1$-Lagrange
finite and boundary elements. Finite element (resp.~boundary element)
matrices are stored in sparse (resp.~dense) format. For the sake of
simplicity, for the discrete unknowns, we use the same notations as for
the continuous unknowns.  The numerical solution of the GOSM
substructured formulation~\eqref{eq:substructurePb} is obtained using
the {GMRes method}.

The spurious resonances for the considered geometry are the zeros of \(J_{\nu}\) divided by \(2\) (\(\Omega_{\text{F}}\) is of radius \(2\)), see e.g.~\cite[Th.~2.25]{ChandlerWildeGrahamEtAl2012NAB}. We assume that \(\kappa\) is not a spurious resonance, so \(\A_{\Sigma}- \imath \B^*_{\Sigma} \T_{\Sigma} \B_{\Sigma}\) is invertible. 
To illustrate the sensitivity or the robustness to spurious resonances of the GOSM for the Johnson-Nédélec and Costabel couplings, we study how the relative error of several quantities of interest evolves with respect to \(\kappa\).
We focus on the wavenumber range \(\kappa \in [4.28,4.42]\), inside which only \(\kappa_1\approx 4.32685\) and \(\kappa_2 \approx 4.38575\) are spurious resonances. All the experiments have been led with a fixed mesh generated for \(\kappa=10\) and \(20\) points per wavelength.
\begin{figure}
  \centering
  \includegraphics[scale=1]{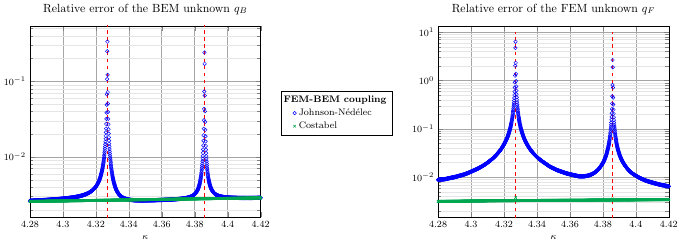}
  \caption{Relative error of the BEM (left) and FEM (right) unknowns.}\label{fig:bem_fem_relErr}
\end{figure}

Our first quantity of interest is the BEM unknown \(q_{\text{B}}\). In~\figpart{fig:bem_fem_relErr}{left} we observe, for the Johnson-Nédélec coupling, peaks in the relative error of \(q_{\text{B}}\) when \(\kappa\) is close to the spurious resonances \(\kappa_1\) and \(\kappa_2\). No peaks are observed in the curve associated with the Costabel coupling.
These results may be interpreted given the explicit expressions of the coupling kernels stated in Propositions~\ref{prop:kernel_JN}--\ref{prop:kernel_Costabel}, and of the scattering operator \(\Sop_{\text{B}}\) (see the end of Section~\ref{sec:problem_definition}).
First, assume we want to compute \(\Sop_{\text{B}} q\) for a given vector \(q\), and denote \(v_q \coloneqq (\A_{\Sigma}- \imath \B_{\Sigma}^* \T_{\Sigma} \B_{\Sigma})^{-1} \B_{\Sigma}^* q\). Because of the definition of \(\B_{\Sigma}\) only the first component of \(v_q\) is relevant to compute \(\Sop_{\text{B}} q = q + 2\, \imath \T_{\Sigma} \B_{\Sigma} v_q\).
Second, when \(\kappa\) is a spurious resonance only the second component of the eigenvectors of the Costabel operator \(\A_{\Sigma}- \imath \B^*_{\Sigma} \T_{\Sigma} \B_{\Sigma}\) is non-trivial. Thus, when numerically inverting that operator near a spurious resonance, we can expect the first component of \(v_q\) to be of good accuracy, while the second might be less accurate.
Turning to the eigenvectors of the Johnson-Nédélec operator, both components are non-trivial, so both components of \(v_q\) might deteriorate (especially the first one).

Next, we study the relative error of the FEM unknown \(q_{\text{F}}\), shown in \figpart{fig:bem_fem_relErr}{right}. As for the BEM curve, we observe peaks around spurious resonances only when considering the Johnson-Nédélec coupling. Nonetheless, we emphasize that \(\A_{\Omega_{\text{F}}}- \imath \B^*_{\Omega_{\text{F}}} \T_{\Omega_{\text{F}}} \B_{\Omega_{\text{F}}}\) is invertible, whatever the wavenumber. The reasoning used to explain the growth of the relative error of the BEM unknown \(q_{\text{B}}\) can not hold for \(q_{\text{F}}\).
This suggests that, near a spurious resonance, the poor quality of \(q_{\text{B}}\) makes also \(q_{\text{F}}\) less accurate. Remembering that \(q_{\text{F}}\) and \(q_{\text{B}}\) are the data shared between the FEM and BEM subdomains, that observation is not surprising.

\begin{figure}
  \includegraphics[scale=1]{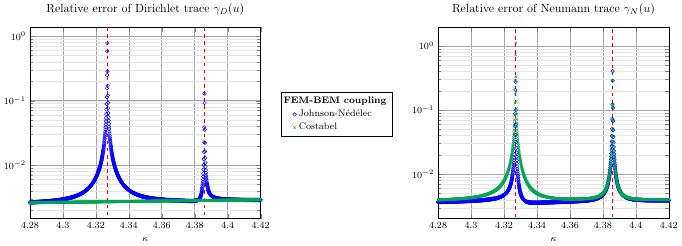}
  \caption{Relative errors of reconstructed Dirichlet (left) and Neumann (right) traces.}\label{fig:traces_relErr}
\end{figure}
We now turn to the evolution of the relative errors of \(\traceDir(u)\) and \(\traceNeu(u)\), with respect to \(\kappa\). 
We emphasize that \(\traceDir(u)\) and \(\traceNeu(u)\) are \emph{reconstructed} from the solution of the GOSM, namely 
\(
\left(
  \traceDir(u),
  \traceNeu(u)
\right)
  \coloneqq
  (\A_{\Sigma}- \imath \B_{\Sigma}^* \T_{\Sigma} \B_{\Sigma})^{-1} \B_{\Sigma}^*q_{\text{B}} = v_{q_{\text{B}}}.
\)
In \cref{fig:traces_relErr}, when \(\kappa\) is close to a spurious
resonance we observe peaks only in the Neumann relative error curve
for the Costabel coupling, while there are peaks in both Dirichlet and
Neumann curves for the Johnson-Nédélec coupling.  The results confirm
what we said previously about the deterioration near a spurious
resonance of the components of \(v_{q}\) for a given vector \(q\).  It
is interesting to note that the reconstructed solution
\((\traceDir(u), \traceNeu(u))\) is more and more spoiled when \(\kappa\) becomes closer to a spurious resonance, even though
\(\A_{\Sigma}- \imath \B^*_{\Sigma} \T_{\Sigma} \B_{\Sigma}\) is
invertible. We highlight that the relative errors go from
\(1\%\) to more than \(100\%\), but are not comparable: around
\(\kappa_1\) the Dirichlet relative error is close to \(1\), while the
Neumann relative error is close to \(0.4\).  These two errors are too
large for \(q_{\text{B}}\) and the reconstructed solution to be considered as
good quality approximations.

\begin{figure}
  \centering
  \includegraphics[scale=1]{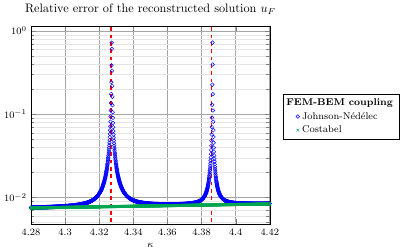}
  \caption{Relative error of the reconstructed solution \(u_{\text{F}}\) in the FEM domain.}\label{fig:fem_relErr}
\end{figure}
We end by looking at \cref{fig:fem_relErr}, which shows the relative error of \(u_{\text{F}}\), the reconstructed volume solution in \(\Omega_{\text{F}}\), with respect to \(\kappa\). We observe peaks around spurious resonances for the Johnson-Nédélec coupling, which are consequences of the peaks observed for \(q_{\text{F}}\) in \figpart{fig:bem_fem_relErr}{right}.
Moreover, it is coherent to recover an error for the solution in \(\Omega_{\text{F}}\) for the Johnson-Nédélec coupling, because its Dirichlet trace should be equal to \(\traceDir(u)\), for which we have also observed a peak in the relative error. 
On the other hand, for the same reason, it is coherent to not observe any peak in the relative error when the Costabel coupling is used. 

Finally, we pinpoint that relative errors peaks arise in the same way when classical FEM-BEM couplings~\eqref{eq:var_pbm} are considered. Indeed, for a geometry with two subdomains without an obstacle, it has been proven that only the Neumann component of the Costabel coupling can be spoiled, see for instance~\cite[Th.~1]{SchulzHiptmair2022SRC}, while for the Johnson-Nédélec coupling both volume and Neumann components can be spoiled.

\ethics{Acknowledgements}{This work is funded by the Inria program “Actions exploratoires” (OptiGPR3D).}
\vspace*{-0.7cm}
%
%
\bibliographystyle{spmpsci}
\bibliography{BOISNEAULT_BONAZZOLI_MARCHAND_CLAEYS_bibliography.bib}

@article{BielakMacCamy1983EIP,
  author    = {J. Bielak and R. C. MacCamy},
  journal   = {Quarterly of Applied Mathematics},
  title     = {An exterior interface problem in two-dimensional elastodynamics},
  year      = {1983},
  number    = {1},
  pages     = {143--159},
  volume    = {41},
  doi       = {10.1090/qam/700668},
  publisher = {American Mathematical Society ({AMS})}
}

@book{SauterSchwab2011BEM,
  author     = {Sauter, S. A. and Schwab, C.},
  publisher  = {Springer-Verlag, Berlin},
  title      = {Boundary element methods},
  year       = {2011},
  isbn       = {978-3-540-68092-5},
  series     = {Springer Series in Computational Mathematics},
  volume     = {39},
  doi        = {10.1007/978-3-540-68093-2},
  mrclass    = {65-02 (35J25 46N40 47G10 65N38)},
  mrnumber   = {2743235},
  mrreviewer = {Paul Andrew Martin},
  pages      = {xviii+561}
}

@incollection{Costabel1987SMC,
  author    = {M. Costabel},
  booktitle = {Mathematical and Computational Aspects},
  publisher = {Springer Berlin Heidelberg},
  title     = {Symmetric Methods for the Coupling of Finite Elements and Boundary Elements},
  year      = {1987},
  pages     = {411--420},
  doi       = {10.1007/978-3-662-21908-9_26}
}

@article{HiptmairMeury2006SFB,
  author    = {R. Hiptmair and P. Meury},
  journal   = {{SIAM} Journal on Numerical Analysis},
  title     = {Stabilized {FEM}-{BEM} Coupling for {H}elmholtz Transmission Problems},
  year      = {2006},
  number    = {5},
  pages     = {2107--2130},
  volume    = {44},
  doi       = {10.1137/050639958},
  publisher = {Society for Industrial {\&} Applied Mathematics ({SIAM})}
}

@article{JohnsonNedelec1980CBI,
  author    = {C. Johnson and J.-C. N{\'{e}}d{\'{e}}lec},
  journal   = {Math. of Comput.},
  title     = {On the coupling of boundary integral and finite element methods},
  year      = {1980},
  number    = {152},
  pages     = {1063--1079},
  volume    = {35},
  doi       = {10.1090/s0025-5718-1980-0583487-9},
  publisher = {American Mathematical Society ({AMS})}
}

@book {MR1822275,
    AUTHOR = {N\'ed\'elec, J.-C.},
     TITLE = {Acoustic and electromagnetic equations},
    SERIES = {Applied Mathematical Sciences},
    VOLUME = {144},
      NOTE = {Integral representations for harmonic problems},
 PUBLISHER = {Springer-Verlag, New York},
      YEAR = {2001},
     PAGES = {x+316},
      ISBN = {0-387-95155-5},
   MRCLASS = {35-02 (35C15 35J05 35Q60 45H05 78A25)},
  MRNUMBER = {1822275},
MRREVIEWER = {Rainer\ Picard},
       DOI = {10.1007/978-1-4757-4393-7},
}

@article{ChandlerWildeGrahamEtAl2012NAB,
  author    = {S. Chandler-Wilde and I. Graham and S. Langdon and E. Spence},
  journal   = {Acta Numerica},
  title     = {Numerical-asymptotic boundary integral methods in high-frequency acoustic scattering},
  year      = {2012},
  pages     = {89--305},
  volume    = {21},
  doi       = {10.1017/s0962492912000037},
  publisher = {Cambridge University Press ({CUP})}
}

@Book{McLean2000SES,
  author    = {McLean, W. C. H.},
  publisher = {Cambridge University Press},
  title     = {Strongly elliptic systems and boundary integral equations},
  year      = {2000},
  isbn      = {0521663326},
  pages     = {357},
}

@Article{SchulzHiptmair2022SRC,
  author    = {Schulz, E. and Hiptmair, R.},
  journal   = {Computational Methods in Applied Mathematics},
  title     = {Spurious Resonances in Coupled Domain-Boundary Variational Formulations of Transmission Problems in Electromagnetism and Acoustics},
  year      = {2022},
  issn      = {1609-9389},
  number    = {4},
  pages     = {971--985},
  volume    = {22},
  doi       = {10.1515/cmam-2021-0197},
  publisher = {Walter de Gruyter GmbH},
}

@PhdThesis{Parolin2020NOD,
  author      = {Parolin, E.},
  school      = {{IP Paris}},
  title       = {{Non-overlapping domain decomposition methods with non-local transmission operators for harmonic wave propagation problems}},
  year        = {2020},
  type        = {PhD thesis},
  hal_id      = {tel-03118712},
  hal_version = {v1},
  keywords    = {Domain decomposition method ; Harmonic waves ; Junction points ; D{\'e}composition de domaine ; Ondes harmoniques ; Points de jonction},
  number      = {2020IPPAE011},
  url         = {https://theses.hal.science/tel-03118712},
}

@unpublished{ourPaper,
  author = {A. Boisneault and M. Bonazzoli and X. Claeys and P. Marchand},
  title = {Discrete {FEM-BEM} coupling with a {G}eneralized {O}ptimized {S}chwarz {M}ethod},
  note = {In preparation},
}

@article {Claeys2023NOS,
    AUTHOR = {Claeys, X.},
     TITLE = {Nonlocal optimized {S}chwarz method for the {H}elmholtz
              equation with physical boundaries},
   JOURNAL = {SIAM J. Math. Anal.},
  FJOURNAL = {SIAM Journal on Mathematical Analysis},
    VOLUME = {55},
      YEAR = {2023},
    NUMBER = {6},
     PAGES = {7490--7512},
      ISSN = {0036-1410,1095-7154},
   MRCLASS = {65N55 (31B10 35J10 65N38)},
  MRNUMBER = {4665035},
       DOI = {10.1137/23M1545847},
}

\end{document}